\tikzset{> =  triangle 45}
\newtheorem{lemma}{Lemma}
\newtheorem{theorem}{Theorem}
\theoremstyle{definition}
\newtheorem{observation}{Observation}
\newcommand{\pfrac}[2]{\left(\frac{#1}{#2}\right)}
\newsavebox{\twosubbox}
\newsavebox{\threesubbox}
\title{Minimum size generating partitions and their application to demand fulfillment optimization problems}
\author{Bo Jones \qquad\qquad John Gunnar Carlsson \\ University of Southern California}
\begin{document}

\maketitle

\begin{abstract}
For $n$ and $k$ integers we introduce the notion of some partition of $n$ being able to \textit{generate} another partition of $n$. We solve the problem of finding the minimum size partition for which the set of partitions this partition can generate contains all size-$k$ partitions of $n$. We describe how this result can be applied to solving a class of combinatorial optimization problems.
\end{abstract}

\begin{section}{Problem}

Unless otherwise specified we assume the convention that a partition $\lambda = (\lambda_1, \lambda_2, \ldots, \lambda_l)$ is such that $\lambda_1 \geq \lambda_2 \geq \cdots \geq \lambda_l.$

Let $n, k$ be integers. We say that a partition $\mu$ of $n$ \textit{\textbf{generates}} a partition $\lambda$ of $n$ if there exists a decomposition of $\mu$ into disjoint subsets $\mu_{S_1}, \ldots \mu_{S_{|\lambda|}}$ such that for each $i \in \{1,\ldots,|\lambda|\}$,  $\mu_{S_i} \vdash \lambda_i$. If for all $\lambda \vdash n$ such that $|\lambda| \leq k$ we have $\mu$ generates $\lambda$ we will say that $\mu$ generates all $k$-partitions of $n$. We wish to find the $\mu$ of minimum size such that $\mu$ generates all $k$-partitions of $n$.

\end{section} 

\begin{section}{Solution}

We can construct an optimal solution to this problem as follows. 
\begin{enumerate}
	\item Let $\mu_1 = \lceil n/k\rceil$. 
	\item For $i \neq 1$ let $\mu_i =  \lceil (n - \sum_{j < i} \mu_j) /k\rceil$.
\end{enumerate}

\end{section} 

\begin{section}{Cost of the solution}

Let $L(n,k)$ be the size, i.e. number of nonzero terms, of the $\mu$ constructed in this way for inputs $n$ and $k$. We do not have a closed form expression for $L(n,k)$, but it is clear that we have the following recurrence relation
\[
	L(n,k) = 1 + L(n - \lceil n/k \rceil,k), \quad L(0,k) = 0,
\]
equivalently
\[
	L(n,k) = 1 + L\left(\left\lfloor \pfrac{k-1}k n \right \rfloor, k\right), \quad L(1,k) = 1.
\]
Let $L'(n,k)$ satisfy the recurrence relation
\[
	L'(n, k) = 1 + L\left(\pfrac {k-1}k n, k\right), \quad L'(1,k) = 1.
\]
Then clearly for any fixed $n,k$, we have $L'(n,k) \geq L(n,k)$. In addition $L'(n,k)$ is increasing in $n$. Thus $L(n,k)$ must have value less than or equal to that of 
\[
	L'\left(\pfrac k {k-1} ^{\lceil \log_{k/(k-1)}(n) \rceil}, k\right) =  \lceil \log_{k/(k-1)}(n) \rceil + 1.
\]

We thus have $|\mu| \leq \lceil \log_{k/(k-1)}(n) \rceil + 1.$ This becomes a terrible upper bound as $k$ increases, but we can quickly compute $|\mu|$ exactly for given $n$ and $k$ by simply running the algorithm. Table \ref{tab:sequences} gives the values of $|\mu|$ for $n \in \{1,\ldots, 20\}$, $k \in \{1,\ldots,10\}$.

\end{section}

\begin{section}{Solution is feasible}

We will make use of the following two key observations.

\begin{observation}[pigeonhole principle]
\label{obs:average}
For every $\lambda \vdash n$ with $|\lambda| \leq k$ there exists some $i$ such that $\lambda_i \geq \lceil n/ k \rceil$.
\end{observation}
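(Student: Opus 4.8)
The plan is to reduce the claim to the single statement that the largest part of $\lambda$ is at least $\lceil n/k\rceil$, and then to establish that by a one-line averaging argument. Because we have adopted the convention $\lambda_1 \geq \lambda_2 \geq \cdots \geq \lambda_l$, the index $i = 1$ is the natural witness: $\lambda_1$ is the maximum part, so it suffices to prove $\lambda_1 \geq \lceil n/k\rceil$, after which the existential ``there exists some $i$'' is satisfied by taking $i = 1$.

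First I would set $l = |\lambda| \leq k$ and note that the parts $\lambda_1, \ldots, \lambda_l$ sum to $n$, so their arithmetic mean is $n/l$. Since the maximum of a finite list of numbers is always at least their average (this is the pigeonhole content of the claim), we get $\lambda_1 \geq n/l$. Invoking $l \leq k$ then yields $n/l \geq n/k$, and hence $\lambda_1 \geq n/k$.

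The one place that requires a moment of care is passing from the real inequality $\lambda_1 \geq n/k$ to the integer bound $\lambda_1 \geq \lceil n/k\rceil$; this is exactly where integrality of the parts enters. Since $\lambda_1$ is an integer with $\lambda_1 \geq n/k$, and $\lceil n/k\rceil$ is by definition the least integer that is $\geq n/k$, we conclude $\lambda_1 \geq \lceil n/k\rceil$, as desired.

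I do not anticipate any genuine obstacle here: the entire substance is the averaging step, and the only subtlety worth flagging is the integrality rounding just described. If one prefers to make the pigeonhole flavor fully explicit, the same conclusion follows by contradiction: were every part to satisfy $\lambda_i \leq \lceil n/k\rceil - 1$, then summing over the at most $k$ parts would give $n \leq k(\lceil n/k\rceil - 1)$, and writing $n = qk + r$ with $0 \leq r < k$ shows $k(\lceil n/k\rceil - 1) < n$ in both the $r = 0$ and $r > 0$ cases, a contradiction.
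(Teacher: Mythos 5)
Your proof is correct, and it is exactly the argument the paper leaves implicit: the paper states this observation without proof, relying on its name (``pigeonhole principle'') to convey the standard reasoning that the maximum part is at least the average $n/l \geq n/k$, with integrality of $\lambda_1$ upgrading this to $\lambda_1 \geq \lceil n/k \rceil$. Your careful handling of the rounding step and the equivalent contradiction formulation are both sound, so there is nothing to correct.
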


\begin{observation}
\label{obs:completion}
Let $\{\lambda_1, \ldots, \lambda_r\}$ be such that it is possible to complete the set $\{\lambda_1, \ldots, \lambda_r\}$ to a set that generates all $k$-partitions of $n$. Then if $\{\mu_{1}, \ldots, \mu_{l}\}$ generates all $k$-partitions of $(n- \sum_{j=1}^r \lambda_j)$, $\{\lambda_1, \ldots, \lambda_r, \mu_{1}, \ldots, \mu_{l}\}$ is such a completion
\end{observation}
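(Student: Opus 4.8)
The plan is to argue directly that the combined multiset generates every admissible target, by transferring the role played by the unknown completion onto the known generator $\{\mu_1, \ldots, \mu_l\}$. Write $m = \sum_{j=1}^r \lambda_j$, so that $\{\mu_1, \ldots, \mu_l\} \vdash (n - m)$ and, by hypothesis, generates all $k$-partitions of $n - m$. Let $\{\nu_1, \ldots, \nu_s\}$ be any set of parts witnessing the first hypothesis, i.e.\ such that $\{\lambda_1, \ldots, \lambda_r, \nu_1, \ldots, \nu_s\} \vdash n$ and generates all $k$-partitions of $n$. Note that $\{\lambda_1, \ldots, \lambda_r, \mu_1, \ldots, \mu_l\}$ trivially contains $\{\lambda_1,\ldots,\lambda_r\}$ and is a partition of $m + (n-m) = n$, so the only real content is that it generates all $k$-partitions of $n$. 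To that end I would fix an arbitrary target $\alpha \vdash n$ with $|\alpha| \leq k$ and produce a decomposition of $\{\lambda_1, \ldots, \lambda_r, \mu_1, \ldots, \mu_l\}$ into $|\alpha|$ groups whose $i$-th group sums to $\alpha_i$.

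First I would apply the generation property of the witnessed completion to $\alpha$, obtaining a decomposition of the parts $\{\lambda_1, \ldots, \lambda_r, \nu_1, \ldots, \nu_s\}$ into groups $G_1, \ldots, G_{|\alpha|}$ with $\sum_{x \in G_i} x = \alpha_i$. I then split each group's total by source: let $a_i$ be the sum of the $\lambda$-parts lying in $G_i$ and $b_i$ the sum of the $\nu$-parts lying in $G_i$, so that $a_i + b_i = \alpha_i$, $\sum_i a_i = m$, and $\sum_i b_i = n - m$.

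The key step, and the one that merits care, is the observation that the nonzero entries among $b_1, \ldots, b_{|\alpha|}$, sorted in decreasing order, form a partition $\beta \vdash (n - m)$ with $|\beta| \leq |\alpha| \leq k$; that is, $\beta$ is a $k$-partition of $n - m$. This is exactly what unlocks the hypothesis on $\{\mu_1, \ldots, \mu_l\}$: that multiset generates $\beta$, yielding a decomposition of the $\mu$-parts into groups summing to the respective nonzero $b_i$. Since $\sum_i b_i = n - m$ equals the total of the $\mu$-parts, every $\mu$-part is consumed exactly once.

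Finally I would reassemble the pieces: for each $i$, form the group consisting of the $\lambda$-parts originally placed in $G_i$ (summing to $a_i$) together with the $\mu$-parts assigned to $b_i$ (summing to $b_i$, and empty when $b_i = 0$). Each such group sums to $a_i + b_i = \alpha_i$, the $|\alpha|$ groups partition $\{\lambda_1, \ldots, \lambda_r, \mu_1, \ldots, \mu_l\}$, and this exhibits a generation of $\alpha$. As $\alpha$ was arbitrary, the combined multiset generates all $k$-partitions of $n$, establishing that it is a completion. The degenerate case $m = n$, in which both $\{\mu_1,\ldots,\mu_l\}$ and the witnessing set are empty, is covered by the same argument with every $b_i = 0$. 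I expect no genuine obstacle beyond the bookkeeping in the key step, namely confirming that the residual $\nu$-contributions $b_i$ assemble into a $k$-partition so that the generation hypothesis on $\mu$ applies.
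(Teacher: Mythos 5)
Your proof is correct and follows essentially the same route as the paper's: invoke the witnessed completion to decompose the target, split each group's sum into its $\lambda$-contribution and its remainder, observe the remainders form a $k$-partition of $n - \sum_j \lambda_j$, and substitute $\mu$-groups for those remainders. You are in fact slightly more careful than the paper in handling zero remainders and the degenerate case $m = n$, but the core argument is identical.
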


\begin{proof}

Suppose $\{\mu_{1}, \ldots, \mu_{l}\}$ generates all $k$-partitions of $(n- \sum_{j=1}^r \lambda_j)$. 

 Consider a partition $\gamma  \vdash n$ with $|\gamma| \leq k$. Our assumption on the $\lambda_j$ tell us we can find a disjoint set decomposition $\cup_{i=1}^{|\gamma|}  S_i = \{1,\ldots, r\}$ and  $\beta_i, \ldots, \beta_{|\gamma|}$ such that
\[
	\gamma_i = \sum_{j \in S_i} \lambda_j + \beta_i \qquad \forall i \in \{1, \ldots, |\gamma|\}. 
\]
We must have $\beta \vdash (n- \sum_{j=1}^r \lambda_j)$. Thus $\mu$ generates $\beta$, and we have a disjoint set decomposition $\cup_{i=1}^k T_i = \{1, \ldots, l\}$ for which 
\[
	\gamma_i = \sum_{j \in S_i} \lambda_j + \sum_{j \in T_i} \mu_j \qquad \forall i \in \{1, \ldots, |\gamma|\},
\]
or equivalently $\{\lambda_1, \ldots, \lambda_r, \mu_{1}, \ldots, \mu_{l}\}$ generates $\gamma$.

\end{proof}

For any $n$ Observation \ref{obs:average} tells us that $\{\lceil n/k \rceil\}$ can always be completed to set that generates all $k$-partitions of $n$. In fact $\lceil n/k \rceil$ is the largest value for which this is true, as adding any larger value would preclude being able to generate partitions with largest value $\lceil n/k \rceil$ while having the generator remain a partition.

Our algorithm works greedily.  It first makes $\mu_1$ the largest number such that it is always possible to complete $\{\mu_1\}$ to a generating set for all $k$-partitions of $n$. Observation \ref{obs:completion} tells us that to complete our generating set we then need only find a generating set for all $k$-partitions of $n - \mu_1$. We make $\mu_2$ the largest number such that it would be possible to complete $\{\mu_2\}$ to a generating set for all $k$-partitions of $n - \mu_1$. To complete $\{\mu_2\}$ to our desired generating set we need only find a generating set for the all $k$-partitions of $n - (\mu_1 + \mu_2)$. We continue this until we consider $n =1$ at which point the largest number such that we could complete the generating set itself completes the generating set, and we then have that the union of all of the $\{\mu_i\}$ must generate all $k$-partitions of $n$.
\vspace{.4in}

\begin{table}[h]
\begin{center}
$n$ \vspace{.3cm} \\
$k$ \begin{tabular}{c|cccccccccccccccccccc}

   & 1 & 2 & 3 & 4 & 5 & 6 & 7 & 8 & 9 & 10 & 11 & 12 & 13 & 14 & 15 & 16 & 17 & 18 & 19 & 20 \\
\midrule
 1 & 1 & 1 & 1 & 1 & 1 & 1 & 1 & 1 & 1 & 1 & 1 & 1 & 1 & 1 & 1 & 1 & 1 & 1 & 1 & 1 \\
 2 & 1 & 2 & 2 & 3 & 3 & 3 & 3 & 4 & 4 & 4 & 4 & 4 & 4 & 4 & 4 & 5 & 5 & 5 & 5 & 5 \\
 3 & 1 & 2 & 3 & 3 & 4 & 4 & 4 & 5 & 5 & 5 & 5 & 6 & 6 & 6 & 6 & 6 & 6 & 7 & 7 & 7 \\
 4 & 1 & 2 & 3 & 4 & 4 & 5 & 5 & 6 & 6 & 6 & 7 & 7 & 7 & 7 & 8 & 8 & 8 & 8 & 8 & 9 \\
 5 & 1 & 2 & 3 & 4 & 5 & 5 & 6 & 6 & 7 & 7 & 7 & 8 & 8 & 8 & 9 & 9 & 9 & 9 & 10 & 10 \\
 6 & 1 & 2 & 3 & 4 & 5 & 6 & 6 & 7 & 7 & 8 & 8 & 9 & 9 & 9 & 10 & 10 & 10 & 11 & 11 & 11 \\
 7 & 1 & 2 & 3 & 4 & 5 & 6 & 7 & 7 & 8 & 8 & 9 & 9 & 10 & 10 & 10 & 11 & 11 & 11 & 12 & 12 \\
 8 & 1 & 2 & 3 & 4 & 5 & 6 & 7 & 8 & 8 & 9 & 9 & 10 & 10 & 11 & 11 & 12 & 12 & 12 & 13 & 13 \\
 9 & 1 & 2 & 3 & 4 & 5 & 6 & 7 & 8 & 9 & 9 & 10 & 10 & 11 & 11 & 12 & 12 & 13 & 13 & 13 & 14 \\
 10 & 1 & 2 & 3 & 4 & 5 & 6 & 7 & 8 & 9 & 10 & 10 & 11 & 11 & 12 & 12 & 13 & 13 & 14 & 14 & 15 \\
\end{tabular}
\caption{The size of the generating partition $\mu$ obtained from our solution for a range of $n$ and $k$. Since we show our solution is optimal this is the minimum size needed to generate all $\lambda \vdash n$ with $|\lambda| \leq k$.}
\label{tab:sequences}
\end{center}
\end{table}

\end{section}

\begin{section}{Solution is optimal}

To prove the algorithm produces an optimal solution we will use something akin to a converse of Observation \ref{obs:completion}. 

\begin{lemma}
\label{lem:remove_max}
Let $\lambda = (\lambda_1 \geq \lambda_2 \geq \cdots \geq \lambda_{l})$ generate all $k$-partitions of $\sum_{j=1}^l \lambda_j$. Then for all $m \in \{2,\ldots,l\}$, we have
\[
	\{\lambda_m, \ldots, \lambda_{l}\} \text{ generates all $k$-partitions of } \sum_{j=m}^l \lambda_j.
\]
\end{lemma}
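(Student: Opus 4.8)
The plan is to reduce to the single removal $m=2$ and then peel repeatedly. If I can show that $\lambda$ generating all $k$-partitions of $N := \sum_{j=1}^l \lambda_j$ forces $\{\lambda_2,\ldots,\lambda_l\}$ to generate all $k$-partitions of $N-\lambda_1$, then, since $\{\lambda_2,\ldots,\lambda_l\}$ is again a weakly decreasing list whose largest part is $\lambda_2$, I can apply the same statement to strip $\lambda_2$, and so on, reaching any $m$. So I will concentrate on the case $m=2$.

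The central device is a swap. Take any $\gamma \vdash (N-\lambda_1)$ with $|\gamma|\le k$ that I want $\{\lambda_2,\ldots,\lambda_l\}$ to generate. Suppose I can exhibit a decomposition of the \emph{full} list $\lambda$ into blocks, one block summing to exactly $\lambda_1$ and the remaining $|\gamma|$ blocks summing to $\gamma_1,\ldots,\gamma_{|\gamma|}$. Let $U_0$ be the block summing to $\lambda_1$. If $1\in U_0$ then $U_0=\{1\}$, since $\lambda_1$ is the largest part and no proper superset of $\{1\}$ can still sum to $\lambda_1$, and the remaining blocks already certify that $\{\lambda_2,\ldots,\lambda_l\}$ generates $\gamma$. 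If instead index $1$ lies in some block $U$ assigned to a part $\gamma_t$, I replace index $1$ inside $U$ by the whole block $U_0$: the new block $(U\setminus\{1\})\cup U_0$ still sums to $\gamma_t$ because I removed value $\lambda_1$ and added value $\lambda_1$, while $\{1\}$ becomes the block for $\lambda_1$. This is legal precisely because $U_0$ sums to \emph{exactly} $\lambda_1$, and it leaves index $1$ isolated, so again $\{\lambda_2,\ldots,\lambda_l\}$ generates $\gamma$. Thus it suffices to prove that $\lambda$ generates the partition $(\lambda_1,\gamma_1,\ldots,\gamma_{|\gamma|})$ of $N$. When $|\gamma|\le k-1$ this is immediate: $(\lambda_1,\gamma_1,\ldots,\gamma_{|\gamma|})$ has at most $k$ parts, so the hypothesis that $\lambda$ generates \emph{all} $k$-partitions of $N$ applies directly, and the swap finishes the case.

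The hard case, and the step I expect to be the real obstacle, is $|\gamma|=k$: now $(\lambda_1,\gamma_1,\ldots,\gamma_k)$ has $k+1$ parts and is not covered by the hypothesis. The natural substitute is to merge $\lambda_1$ into one part, say the largest, and use that $\lambda$ generates $\delta=(\gamma_1+\lambda_1,\gamma_2,\ldots,\gamma_k)$; if index $1$ happens to sit in the block assigned to $\gamma_1+\lambda_1$, deleting it leaves a block summing to $\gamma_1$ and I am done. The trouble is that index $1$ may land in another block, and a purely local repair can fail, since the block assigned to $\gamma_1+\lambda_1$ can consist of values (all equal to $3$, say) no sub-collection of which sums to $\lambda_1$. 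A small check shows this difficulty is genuine rather than cosmetic, so the global hypothesis must be used in force.

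To get around this I plan to prove the structural characterization that a weakly decreasing $\lambda$ generates all $k$-partitions of $N$ if and only if $\lambda_i \le \lceil (\sum_{j\ge i}\lambda_j)/k\rceil$ for every $i$. The ``if'' direction is clean: $\lambda_1\le\lceil N/k\rceil$ makes $\{\lambda_1\}$ completable by the reasoning following Observation~\ref{obs:average}, the tail $\{\lambda_2,\ldots,\lambda_l\}$ inherits exactly the same inequalities, so by induction on $l$ it generates all $k$-partitions of $N-\lambda_1$, and Observation~\ref{obs:completion} assembles these into a generation of $N$. With this characterization the Lemma is immediate, because the defining inequalities for the suffix $\{\lambda_m,\ldots,\lambda_l\}$ are a subset of those for $\lambda$. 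Everything therefore rests on the ``only if'' direction, which is logically equivalent to the Lemma itself and is the main obstacle; I expect to attack it by contradiction, taking a minimal index $i$ at which the inequality fails and converting the resulting inability of the tail $\{\lambda_i,\ldots,\lambda_l\}$ to realize the balanced partition of $\sum_{j\ge i}\lambda_j$ into an explicit $k$-partition of $N$ that $\lambda$ cannot generate, contradicting the hypothesis.
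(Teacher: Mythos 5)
Your reduction to $m=2$, the swap device, and the treatment of $|\gamma|\le k-1$ are all correct, and you correctly identify $|\gamma|=k$ as the real obstacle. But the proposal never gets past that obstacle: you make everything rest on the ``only if'' direction of your claimed characterization ($\lambda$ generates all $k$-partitions of $N$ iff $\lambda_i \le \lceil (\sum_{j\ge i}\lambda_j)/k\rceil$ for all $i$), and that direction is not proved --- you yourself observe it is ``logically equivalent to the Lemma itself,'' and then offer only a one-sentence plan of attack. Reducing the statement to an equivalent unproven statement plus a declaration of intent is a genuine gap, and it sits exactly where the difficulty lives. Worse, the plan as phrased conceals the hard step: ``converting the inability of the tail to realize the balanced partition into an explicit $k$-partition of $N$ that $\lambda$ cannot generate'' requires choosing that partition with care. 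The natural attempt --- attach $\lambda_1,\ldots,\lambda_{i-1}$ to distinct parts of the balanced partition of $T_i := \sum_{j\ge i}\lambda_j$ --- fails for the same reason your ``local repair'' fails: pigeonholing the $i$ oversized values $\lambda_1,\ldots,\lambda_i$ into the $i-1$ enlarged parts yields no contradiction. A choice that does work is to lump everything into one part: take $\gamma_1=\lambda_1+\cdots+\lambda_{i-1}+\lceil T_i/k\rceil$ and let the remaining parts be the rest of the balanced partition of $T_i$; then every part other than $\gamma_1$ is smaller than $\lambda_i$, so any generation would have to place all of $\lambda_1,\ldots,\lambda_i$ inside $\gamma_1$, forcing $\lceil T_i/k\rceil\ge\lambda_i$, a contradiction. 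Without this (or something equivalent) spelled out, the proposal is not a proof.

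For comparison, the paper closes this same hole with a different tool: it first proves Lemma \ref{lem:greedy}, that any $\gamma$ can be generated \emph{greedily}, always placing the next largest value of $\lambda$ into the largest part where it still fits; that proof, by contradiction, is where the hypothesis gets used ``in force.'' Lemma \ref{lem:remove_max} then follows by applying greedy generation to $\gamma=(\lambda_1+\nu_1,\nu_2,\ldots,\nu_{|\nu|})$: the greedy rule puts $\lambda_1$ into the first part, and the remaining placements are precisely a generation of $\nu$ by $(\lambda_2,\ldots,\lambda_l)$. Your swap device is a nice substitute for that last step, but you have no analogue of Lemma \ref{lem:greedy} to power it; supplying one (for instance, the lumped-partition argument above, or the characterization proved in full) is what your write-up still needs.
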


 To obtain this result we will use the following lemma. 

\begin{lemma}
\label{lem:greedy}
Suppose $\lambda$ generates all $k$-partitions of $n$. Then for any $\gamma \vdash n$ with $|\gamma| \leq k$, it is possible to generate $\gamma$ greedily with $\lambda$ as follows. Place the largest $\lambda_j$, $\lambda_1$, in the sum generating the largest $\gamma_i$, $\gamma_1$. For each $j \in \{2,\ldots, |\lambda|\}$ iteratively place $\lambda_j$ in the sum generating the largest $\gamma_i$ for which $\lambda_j$ can still fit in the sum. 

For example, by our algorithm (3,2,2,1,1) generates all 3-partitions of 9. To generate (4,3,2) we can use
\[
	(4,3,2) = (2+2, 3, 1+1),
\]
but this lemma tells us we are also guaranteed to be able to use
\[
	(3+1, 2 + 1, 2).
\]
\end{lemma}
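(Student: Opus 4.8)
The plan is to prove the stronger operational claim that the greedy procedure never \emph{stalls}: if we process the parts of $\lambda$ in weakly decreasing order and, at each step, drop the current part into the bin of largest target $\gamma_i$ that still has room, then every part eventually gets placed. This suffices, because $\sum_j \lambda_j = n = \sum_i \gamma_i$; so if no part is ever rejected, then at termination each bin sum is at most its target yet the totals agree, forcing every bin to be filled to exactly $\gamma_i$. That filled configuration is precisely a valid generation of $\gamma$ realized by the greedy assignment.

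I would argue by an exchange/induction on the parts taken in decreasing order, maintaining the invariant that there exists a \emph{valid} generation of $\gamma$ agreeing with greedy's placements on the prefix $\lambda_1, \ldots, \lambda_{j-1}$. The base case rests on $\lambda_1 \le \gamma_1$: since $\lambda$ generates $\gamma$ there is some valid decomposition, and the bin containing $\lambda_1$ has capacity at most $\gamma_1$, so greedy's first move (placing $\lambda_1$ in the bin of target $\gamma_1$) is consistent with a valid generation. The inductive \emph{repair step} is the heart of the matter: given a valid generation matching greedy on $\lambda_1,\ldots,\lambda_{j-1}$ in which $\lambda_j$ currently sits in bin $b$, while greedy would place $\lambda_j$ in the higher-priority bin $g$ (so $\gamma_g \ge \gamma_b$ and both still have room $\ge \lambda_j$), I must rearrange contents so that $\lambda_j$ moves into $g$ with all bins still exactly filled.

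The main obstacle lives entirely in this repair step. A two-bin swap would require extracting from bin $g$ a sub-collection of parts of total size exactly $\lambda_j$ to relocate to $b$ in exchange for $\lambda_j$; such an exact subset-sum need not exist. This is not a defect of the bookkeeping but a genuine feature of the problem: greedy really can stall when $\lambda$ generates only the single target $\gamma$. For instance, $\lambda=(3,2,2)$ admits the valid generation $(4,3)=(2+2,\,3)$, yet greedy places $3$ into the bin of capacity $4$ and then stalls, because the two size-$2$ parts cannot be split to fill the residual slots of size $1$. Consequently any correct proof must invoke the full hypothesis that $\lambda$ generates \emph{all} $k$-partitions of $n$, not merely $\gamma$; the difficulty is converting this global richness into the local rearrangement required.

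The route I would take to exploit the hypothesis is proof by contradiction. Assume greedy stalls on $\gamma$ at part $\lambda_j$; then every bin has residual capacity strictly less than $\lambda_j$, every still-unplaced part is at most $\lambda_j$, and the residuals and the unplaced parts have equal total. From this fragmented configuration I would engineer a specific partition $\gamma'$ of $n$ with $|\gamma'|\le k$ that $\lambda$ provably cannot generate --- intuitively one whose part sizes expose exactly the subset-sum gap that caused the stall (in the toy example the stall leaves unfillable slots of size $1$, and indeed $\lambda=(3,2,2)$ cannot generate $(6,1)$). The existence of such a $\gamma'$ contradicts the hypothesis, ruling out any stall. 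Pinning down $\gamma'$ in general --- translating ``greedy stalled against fragmented residuals'' into a concrete un-generatable target of at most $k$ parts --- is the step I expect to be most delicate, and is where the completion machinery of Observations \ref{obs:average} and \ref{obs:completion} would have to be brought to bear.
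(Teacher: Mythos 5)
Your overall strategy in the final paragraph --- assume greedy stalls at some part $\lambda_{j^*}$, then use the hypothesis that $\lambda$ generates \emph{all} $k$-partitions of $n$ to produce a partition on which that hypothesis fails --- is exactly the paper's strategy, and your diagnosis that generating $\gamma$ alone is insufficient (your example $\lambda=(3,2,2)$, $\gamma=(4,3)$) correctly identifies why the exchange argument of your second paragraph cannot work. But the proposal stops precisely at the step that constitutes the proof: you never exhibit the partition $\gamma'$, and you explicitly defer it as the ``most delicate'' step. That is a genuine gap. Everything you do establish (all residuals $\alpha_i$ are strictly less than $\lambda_{j^*}$; unplaced parts are at most $\lambda_{j^*}$; residuals and unplaced parts have equal total) follows immediately from the definitions, so the proposal is a correct plan with the correct target, but not yet an argument.

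The missing construction is short, and your own toy example contains it. Let $\alpha_i$ denote the residual of the bin with target $\gamma_i$ at the moment of the stall. Take
\[
	\gamma' = \Bigl(n - \sum_{i=2}^{|\gamma|} \alpha_i,\ \alpha_2,\ \ldots,\ \alpha_{|\gamma|}\Bigr),
\]
i.e.\ keep every residual except $\alpha_1$ as its own part and lump everything else into one large part; this has at most $|\gamma| \le k$ nonzero parts. (In your example the residuals are $1,1$, and this recipe yields $(6,1)$ --- exactly the partition you guessed ad hoc.) By hypothesis $\lambda$ generates $\gamma'$. But each $\alpha_i < \lambda_{j^*}$, so none of $\lambda_1, \ldots, \lambda_{j^*}$ (each of size $\ge \lambda_{j^*}$) can appear in a group summing to some $\alpha_i$; all of them must land in the large part. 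Since the parts placed before the stall sum to $n - \sum_{i=1}^{|\gamma|}\alpha_i$, this forces
\[
	n - \sum_{i=2}^{|\gamma|}\alpha_i \;\ge\; \sum_{j=1}^{j^*}\lambda_j \;=\; \Bigl(n - \sum_{i=1}^{|\gamma|}\alpha_i\Bigr) + \lambda_{j^*},
\]
i.e.\ $\lambda_{j^*} \le \alpha_1$, contradicting $\alpha_1 < \lambda_{j^*}$. Note also that, contrary to your expectation, no completion machinery (Observations \ref{obs:average} and \ref{obs:completion}) is needed: once $\gamma'$ is written down, the contradiction is a two-line counting argument, and this is precisely how the paper concludes.
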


\begin{proof}[Proof of Lemma \ref{lem:greedy}]

Let $\gamma \vdash n$ with $|\gamma| \leq k$. Suppose towards a contradiction that there is a point in our greedy generation of $\gamma$ at which we are trying to add a value $\lambda_{j^*}$, but it does not fit in any of the sums. 

There is a disjoint set decomposition $\cup_{i =1}^{|\gamma|} S_i = \{1, \ldots, j^*-1\}$ such that the state of our generation when we are trying to add $\lambda_{j^*}$ looks like
\[
	\gamma_i = \left(\sum_{j \in S_i} \lambda_j \right)+ \alpha_i \qquad \forall i \in \{1, \ldots, |\gamma|\},
\]
where $\alpha_i$ represents the amount of $\gamma_i$ we have yet to generate. We must have 
\[
	\alpha_i < \lambda_{j^*} \qquad \forall i \in \{1, \ldots, |\gamma|\}.
\]

The key to this argument is that since $\lambda$ can generate all $k$-partitions of $n$ it can generate the following partition
\begin{align*}
&\alpha_{|\gamma|} \\
&\alpha_{|\gamma|-1}\\
&\:\vdots \\
&\alpha_{2}\\
&n - \sum_{i=2}^{|\gamma|} \alpha_i.
\end{align*}

Furthermore, since each $\alpha_i$ is less than $\lambda_{j*}$, when we generate this partition we cannot use any of the values greater than or equal to $\lambda_{j^*}$ in the sums that generate the $\alpha_i$. Thus $\lambda_{j^*}$ and all of the values in $\lambda_j$ such that $j \in \cup_{i=1}^{|\gamma|} S_i$ must be used in the generating sum for $n - \sum_{i=2}^{|\gamma|} \alpha_i$.
Thus
\[
	n - \sum_{i=2}^{|\gamma|}  \alpha_i \geq \left(\sum_{i=1}^{|\gamma|} \sum_{j \in S_i} \lambda_j\right) + \lambda_{j^*}.
\]
On the other hand we have 
\[
	n = \left(\sum_{i=1}^{|\gamma|} \sum_{j \in S_i} \lambda_j\right) + \sum_{i=1}^{|\gamma|} \alpha_i.
\]
Thus
\[
	n - \sum_{i=2}^{|\gamma|}  \alpha_i = \left(\sum_{i=1}^{|\gamma|} \sum_{j \in S_i} \lambda_j\right)  + \alpha_1.
\]
Combining the above
\[
	\lambda_{j^*} \leq \alpha_1,
\]
a contradiction. Having arrived at a contradiction we conclude that at every step $j$ in our greedy generation we are able to add $\lambda_j$ to our generating sums. So we can introduce all the $\lambda_j$ without our generating sums ever exceeding the $\gamma_i$, which means that since introducing all of them introduces a total sum of $n$, we generate all of $\gamma$. 

\end{proof}

We are now ready to prove Lemma \ref{lem:remove_max}.

\begin{proof}[Proof of Lemma \ref{lem:remove_max}]
It clearly suffices to prove the claim for $m =2$. 

The proof is rather straightforward given Lemma $\ref{lem:greedy}$. Suppose $\lambda$ generates all $k$-partitions of $n$ with $|\lambda| = l$ and suppose $\nu$ is a partition of $(n - \lambda_1) = \sum_{j=2}^l \lambda_j$ with $|\nu| \leq k$. We construct a partition $\gamma$ of $n$ of the same size as $\nu$ by letting
\[
	\gamma_1 = \lambda_1 + \nu_1, \qquad \gamma_i = \nu_i \quad \forall i \in \{2, \ldots, |\nu|\}.
\]
Then $\gamma_1 = \lambda_1 + \nu_1$ is clearly the maximum element of $\gamma$. Thus by Lemma $\ref{lem:greedy}$ it is possible to generate $\gamma$ using $\lambda$ by first adding $\lambda_1$ to the generating sum for $\gamma_1$. Then the fact that we can fill out the rest of the $\gamma$ generating sums is precisely equivalent to we can generate $\nu$ using $(\lambda_2, \ldots, \lambda_l)$.

\end{proof}

Now to prove optimality we rely on one more lemma.

\begin{lemma}
\label{lem:sum_inequality}
Let $\mu$ be the solution obtained by the algorithm. Let $\lambda$ be a partition that also generates all $k$-partitions of $n$. Then
\[
	\sum_{i=1}^m \lambda_i \leq \sum_{i=1}^m \mu_i \qquad \forall m \in \{1,\ldots, \min(|\mu|,|\lambda|)\}.
\]

\end{lemma}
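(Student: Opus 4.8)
The plan is to prove the (formally stronger, but equivalent) majorization statement: adopting the convention $\lambda_i = 0$ for $i > |\lambda|$ and $\mu_i = 0$ for $i > |\mu|$, I would show that $\sum_{i=1}^m \lambda_i \le \sum_{i=1}^m \mu_i$ holds for \emph{every} $m \ge 0$. Since both $\lambda$ and $\mu$ sum to $n$, the inequality in the stated range $m \le \min(|\mu|,|\lambda|)$ is then the interesting special case. The argument is by strong induction on $n$, the claim being vacuous at $n = 0$. The base case $m = 1$ asks for $\lambda_1 \le \lceil n/k\rceil = \mu_1$; this is exactly the fact noted when motivating the algorithm, namely that $\lceil n/k\rceil$ is the largest value admissible as a part of any generator. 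Indeed, to generate the nearly-balanced $k$-partition of $n$ guaranteed by Observation \ref{obs:average}, whose largest entry is $\lceil n/k\rceil$, every part of $\lambda$ must be a summand within some group of total at most $\lceil n/k\rceil$, forcing $\lambda_1 \le \lceil n/k\rceil$.

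For the inductive step with $m \ge 2$, I would peel off $\lambda_1$. By Lemma \ref{lem:remove_max} the tail $(\lambda_2, \ldots, \lambda_{|\lambda|})$ generates all $k$-partitions of $n - \lambda_1 < n$, so the induction hypothesis applies to it against the greedy solution on total $n - \lambda_1$. Writing $S_p(N)$ for the sum of the first $p$ parts of the greedy solution on total $N$, this gives $\sum_{i=2}^m \lambda_i \le S_{m-1}(n - \lambda_1)$, hence $\sum_{i=1}^m \lambda_i \le \lambda_1 + S_{m-1}(n-\lambda_1)$. On the other side, the algorithm constructs $\mu$ by re-running itself on $n - \mu_1$ after the first step, so $\sum_{i=1}^m \mu_i = \mu_1 + S_{m-1}(n - \mu_1)$. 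The whole proof therefore reduces to the single inequality
\[
  \lambda_1 + S_{m-1}(n - \lambda_1) \ \le\ \mu_1 + S_{m-1}(n - \mu_1).
\]

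This last inequality is the step I expect to be the main obstacle, since it compares greedy partial sums taken on the two \emph{different} totals $n - \lambda_1 \ge n - \mu_1$. I would dispatch it by passing to the leftover mass. Let $f(x) = x - \lceil x/k\rceil$; then the amount remaining after $p$ greedy steps on total $N$ is precisely the $p$-fold iterate $f^{(p)}(N) = N - S_p(N)$. Substituting $S_{m-1}(N) = N - f^{(m-1)}(N)$ on both sides and cancelling $n$, the displayed inequality is equivalent to $f^{(m-1)}(n - \lambda_1) \ge f^{(m-1)}(n - \mu_1)$. Since $f(x+1) - f(x) = 1 - \bigl(\lceil (x+1)/k\rceil - \lceil x/k\rceil\bigr) \in \{0,1\}$, the map $f$ is nondecreasing, hence so is every iterate $f^{(m-1)}$; as $n - \lambda_1 \ge n - \mu_1$ by the base case, the inequality follows. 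Finally, a small amount of bookkeeping (using the zero-padding convention) confirms that the induction hypothesis remains usable even when $m-1$ exceeds the number of parts of the greedy solution on $n - \lambda_1$, because in that regime both partial sums have already reached their shared total $n - \lambda_1$, so the needed bound degenerates to an equality.
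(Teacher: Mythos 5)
Your proof is correct, but it takes a genuinely different route from the paper's. The paper fixes $n$ and inducts on $m$: it applies Lemma \ref{lem:remove_max} at \emph{every} index $m$ to see that $\lambda_m$ is the largest part of a partition generating all $k$-partitions of $n - \sum_{i=1}^{m-1}\lambda_i$, hence $\lambda_m \leq \lceil (n - \sum_{i=1}^{m-1}\lambda_i)/k\rceil$, and then absorbs this bound into a single ceiling via the identity $x + \lceil (n-x)/k\rceil = \lceil n/k + \frac{k-1}{k}x\rceil$, whose monotonicity in $x$ lets the induction hypothesis be substituted directly. You instead do strong induction on $n$, use Lemma \ref{lem:remove_max} only once per step (to peel off $\lambda_1$), compare the tail of $\lambda$ against the greedy solution on the \emph{different} total $n-\lambda_1$, and close the gap with the leftover map $f(x) = x - \lceil x/k\rceil$, the identity $S_p(N) = N - f^{(p)}(N)$, and monotonicity of its iterates. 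Both arguments ultimately rest on the same elementary fact --- monotonicity of $f$, equivalently of $x \mapsto x + \lceil(n-x)/k\rceil$ --- but the paper invokes it one greedy step at a time, while you invoke it for the $(m-1)$-fold iterate, which costs you the extra bookkeeping about iterated leftovers. What your version buys: the zero-padded majorization for all $m \geq 0$ is formally stronger, and it makes the main optimality theorem an immediate corollary (taking $m = |\lambda|$ gives $\sum_{i=1}^{|\lambda|}\mu_i \geq n$, forcing $|\mu| \leq |\lambda|$), whereas the paper's statement still needs its short closing contradiction. What the paper's version buys: a single induction over one partition pair, no reasoning about greedy runs on two different totals, and only a one-line ceiling manipulation at the heart of the step. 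One small slip on your side: Observation \ref{obs:average} says every partition of $n$ into at most $k$ parts has some part at least $\lceil n/k\rceil$; your base case actually needs the \emph{existence} of a balanced partition whose largest part equals $\lceil n/k\rceil$, which is true but is not literally what that observation states.
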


\begin{proof}
We induct on $m$. For $m =1$ this is clear. If $\lambda_1 > \mu_1$ then $\lambda_1 > \lceil n/k \rceil$ which implies $\lambda_1$ cannot be used in generating any partitions $\gamma$ that have $\gamma_1 = \lceil n/k \rceil$. Thus $\lambda$ would not be feasible.

 Now suppose the result holds for $m-1$. Clearly 
\[
	\sum_{i=1}^m \lambda_i = \left(\sum_{i=1}^{m-1} \lambda_i\right) + \lambda_m.
\]
We know something about $\lambda_m$. By Lemma \ref{lem:remove_max}, $\lambda_m$ is the largest value in a partition that generates all $k$-partitions of $n-\sum_{i=1}^{m-1} \lambda_i$. Thus $\lambda_m$ is less than or equal to the largest such a value could be, $\left\lceil (n-\sum_{i=1}^{m-1} \lambda_i)/k\right\rceil$. Thus 
\begin{align*}
\sum_{i=1}^m \lambda_i & = \left(\sum_{i=1}^{m-1} \lambda_i\right) + \lambda_m \\
& \leq \left(\sum_{i=1}^{m-1} \lambda_i\right) + \left\lceil \left(n-\sum_{i=1}^{m-1} \lambda_i\right)\bigg/k\right\rceil \\
& =  \left\lceil \left(\sum_{i=1}^{m-1} \lambda_i\right)+ \left(n-\sum_{i=1}^{m-1} \lambda_i\right) \bigg/k\right\rceil \\
& =  \left\lceil \frac n k + \pfrac{k-1}k \sum_{i=1}^{m-1} \lambda_i \right\rceil  \\
& \leq \left\lceil \frac n k + \pfrac{k-1}k \sum_{i=1}^{m-1} \mu_i \right\rceil \\
& =  \left(\sum_{i=1}^{m-1} \mu_i \right)+ \left\lceil \left(n-\sum_{i=1}^{m-1} \mu_i\right)\bigg/k\right\rceil \\
& =  \left(\sum_{i=1}^{m-1}\mu_i \right) + \mu_m \\
& =  \sum_{i=1}^{m} \mu_i.
\end{align*}
\end{proof}

 We are now ready to prove the main theorem.

\begin{theorem}
The algorithm presented produces an optimal solution.
\end{theorem}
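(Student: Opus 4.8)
The plan is to deduce the theorem directly from Lemma \ref{lem:sum_inequality}, which has already done the substantive work. Since both $\mu$ and $\lambda$ are partitions of the same integer $n$, their full sums agree; the size inequality will then fall out of comparing partial sums at the index where the shorter partition terminates.

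First I would fix an arbitrary partition $\lambda$ that generates all $k$-partitions of $n$ and aim to show $|\mu| \leq |\lambda|$. Because $\mu$ is itself feasible (established in the feasibility section), proving this inequality against every competitor $\lambda$ shows that $\mu$ attains the minimum possible size, which is exactly the assertion of optimality.

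Next I would argue by contradiction: suppose $|\lambda| < |\mu|$. Then $\min(|\mu|, |\lambda|) = |\lambda|$, so I may apply Lemma \ref{lem:sum_inequality} at $m = |\lambda|$ to obtain
\[
	n = \sum_{i=1}^{|\lambda|} \lambda_i \leq \sum_{i=1}^{|\lambda|} \mu_i.
\]
On the other hand, because the size $|\mu|$ counts only the nonzero terms of $\mu$, every $\mu_i$ with $i \leq |\mu|$ is strictly positive; hence truncating the sum at an index $|\lambda| < |\mu|$ strictly decreases it:
\[
	\sum_{i=1}^{|\lambda|} \mu_i < \sum_{i=1}^{|\mu|} \mu_i = n.
\]
Chaining these two displays yields $n < n$, the desired contradiction, so in fact $|\mu| \leq |\lambda|$.

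I expect no genuine obstacle to remain at this stage, since the delicate reasoning has already been packaged into Lemma \ref{lem:sum_inequality} (and, through it, into Lemmas \ref{lem:remove_max} and \ref{lem:greedy}). The only point demanding a moment's care is the strictness of the second inequality above: it relies on the convention that the size of a partition counts its nonzero parts, so that each omitted part $\mu_{|\lambda|+1}, \ldots, \mu_{|\mu|}$ contributes a strictly positive amount and the truncated sum genuinely falls short of $n$.
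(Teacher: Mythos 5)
Your proposal is correct and matches the paper's own proof essentially verbatim: both argue by contradiction, apply Lemma \ref{lem:sum_inequality} at $m = |\lambda|$, and note that the truncated sum $\sum_{i=1}^{|\lambda|} \mu_i$ falls strictly short of $n$ because every part of $\mu$ is positive, yielding $n < n$. The only difference is cosmetic: you spell out explicitly the strictness step that the paper compresses into the single display $\sum_{i=1}^{|\lambda^*|} \lambda^*_i \leq \sum_{i=1}^{|\lambda^*|} \mu_i < n$.
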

\begin{proof}
Let $\mu$ be the solution obtained from the algorithm. Suppose towards a contradiction that there exists a $\lambda^*$, with $|\lambda^*| < |\mu|$, that generates all $k$-partitions of $n$.

By Lemma \ref{lem:sum_inequality}
\[
\sum_{i=1}^{|\lambda^*|} \lambda^*_i \leq \sum_{i=1}^{|\lambda^*|} \mu_i < n.
\]
a contradiction to $\lambda^*$ partitioning $n$. Having arrived at a contradiction we conclude the solution generated by the algorithm is optimal.
\end{proof}

\end{section}

\begin{section}{Application to demand fulfillment optimization}

There are many optimization problems that have essentially the following form with some auxiliary structure. We have a set of customers each having some demand and we have a set of fulfillers capable of filling this demand. Oftentimes there is an assumption that any customer's demand must be filled entirely by a single fulfiller. However, this assumption can be relaxed. That is we can allow for a customer's demand to be filled by multiple fulfillers each contributing a portion of the demand. Since a customer's demand is split over fulfillers we will call this the split version of our problem. 

Say customer $i$ has demand $n_i$ and there are $k$ fulfillers. When the demands and the amount of demand a fulfiller could contribute to any customer are integral a feasible solution to the split problem induces a bunch of $\lambda_i \vdash n_i$ of size less than or equal to $k$. Namely 
\[
	\lambda_i =\text{(part of $n_i$ serviced by fulfiller 1, $\ldots$ , part of $n_i$ serviced by fulfiller $k$)}.
\]

This observation opens the door to reducing the split version of the problem to the non-split version. For problems for which the split version is considerably harder than the non-split version this is desirable. One such problem is the Capacitated Vehicle Routing Problem (CVRP). 

The idea for constructing an instance of the non-split version of such a problem that at least well-approximates the split problem was first introduced by Chen et al. \cite{chen_2017}. The construction is as follows. For each $i$ we split the demand $n_i$ into some partition $\mu_i \vdash n_i$ and split customer $i$ into $|\mu_i|$ copies of itself each having demand a different element of $\mu_i$. We then solve the non-split problem on the resulting set of customers. We see that we can easily recover a solution to the split problem by having each fulfiller fill an amount of customer $i$'s original demand given by the total demand it filled on the copies of customer $i$ in the non-split problem.

Now observe that in order for this construction to be a proper reduction, i.e. in order for the non-split problem to have the same optimal objective as the split problem, we must have that for all $i$, $\mu_i$ generates the $k$-partition of $n_i$ induced by the optimal solution to the split problem. Thus it would be sufficient for $\mu_i$ to generate all $k$-partitions of $n_i$. We obviously know the partition composed of $n_i$ (1)s would do this, but this would mean copying every customer $n_i$ times, making the non-split problem instance very large. Clearly this is where our result comes in. By using the minimum size $\mu_i$ such that $\mu_i$ generates all $k$-partitions of $n$ we guarantee the equality of the objective values of the split and non-split problems without making the problem size too large. In fact, we know for sure that the number of customers in the reduced non-split problem is bounded above by the sum, over all split-problem customers $i$, of $(\lceil\log_{k/(k-1)}(n_i) \rceil + 1)$.

Furthermore, if the split problem structure is such that for all $i$, for all $\lambda \vdash n_i$ with $|\lambda| \leq k$, there is a split problem instance for which the partition for customer $i$ induced by the optimal solution is $\lambda$, then having each $\mu_i$ generates all $k$-partitions of $n_i$ is not only sufficient but necessary for the objective values to be equal. Therefore in this case we have that using our solution to choose the $\mu_i$ minimizes the size of the reduced problem.

\end{section}

\bibliography{partitions_citations.bib}
\bibliographystyle{ieeetr}

\end{document}